\theoremstyle{plain}
\newtheorem{lemma}{Lemma}
\newtheorem{theorem}{Theorem}
\theoremstyle{remark}
\theoremstyle{definition}
\newtheorem{example}{Example}
\newtheorem{question}{Question}
\newtheorem{definition}{Definition}
\renewcommand{\le}{\leqslant}
\renewcommand{\ge}{\geqslant}
\DeclareMathOperator {\ord}{ord}
\DeclareMathOperator {\Card}{Card}
\DeclareMathOperator {\rk}{rk}
\def\u{\bold {u}}
\newcommand{\N}{\mathbb{N}}
\newcommand{\Z}{\mathbb{Z}}
\newcommand{\K}{\mathbb{K}}
\def\deg{\operatorname{deg}}
\def\ord{\operatorname{ord}}
\def\C{\mathbb C}
\def\F{\mathcal {F}}
\begin{document}
\title[]{B\'ezout theorem\\
for a graded ideal in a ring   of generalized polynomials.
} 
\author{M. V. Kondratieva} 
\address{Moscow State University\\ 
Department of Mechanics and Mathematics\\ Leninskie Gory, Moscow, 
Russia, 119991.}
\email{kondratieva@sumail.ru} 
 
\begin{abstract} 

The article proved the upper
bound of leading  coefficient of
characteristic polynomial of
graded ideal in a ring of generalized polynomials.

Examples of such rings are as well the rings of commutative polynomials
(for which the classical B\'ezout theorem holds),
 as some rings of differential operators.
For a system of  generalized homogeneous equations 
in small codimensions we   obtain
exact polynomial in $d$ estimates.
In the general case, the estimate is double exponential in
$\tau$:
$O(d^{2^{\tau-1}})$, 
where  $d$ is a maximal degree of generators 
of a graded ideal, 
 $\tau$ is it's codimension.

For systems of linear differential equations
bounds of the same asymptotics, but by other methods, were obtained
by D.Grigoryev 
in~\cite{Grig}.

 Keywords:
Differential algebra,
ring of generalized polynomials,
graded ideal,
characteristic polynomial, typical dimension,
B\'ezout theorem.
\end{abstract}

\maketitle
\thispagestyle{empty}

\section{Introduction}
In algebraic geometry and commutative algebra
many studies are devoted to the Hilbert polynomial.
Differential dimension polynomial was introduced  by
E. Kolchin~\cite {Kolchin}  and 
has the
same important role in differential algebra.  
Estimations of its coefficients
are the classic unsolved problems of differential algebra.

In recent years  interest in computer algebra has increased, 
and one of its directions is the study of Gr\"obner bases.
For polynomial ideals, this notion 
was studied    quite fully, in particular
upper and lower degree bounds of
polynomials in the Gr\"obner basis by
degrees of generators of the ideal 
(see, for example, \cite{Dube}) are found. It is interesting, that the 
complexity  of Gr\"obner basis computation
 and of task of finding the leading coefficient of  
Hilbert polynomial
has different asymptotic order.

For rings of differential operators over a field, great success
in such studies  were  reached by D.Grigoryev and A. Chistov 
(see \cite {Grig}, \cite {Janet}).
Here the situation is another, 
than for polynomial ideals, and is known only
upper double exponential bound (as for 
Gr\"obner basis orders, and for the leading coefficient of dimension
polynomial).
 
To generalize these results, we will consider  rings,
introduced in
\cite {KLMP}, see definition 4.1.4.
Gr\"obner basis technique works for ideals of such 
(in the general case, non-commutative)
rings, and the concept of
characteristic
polynomial  is defined.
 
Our bound (Theorem~\ref{prim})
in  a situation when  degree of  characteristic polynomial
1 less than the maximum possible
(we will use the term codimension 1)
coincides with Kolchin’s result (see \cite{Kolchin}, p. 199).

By analogy with this linear estimate, Kolchin believed that in
other codimension $\tau$ the bound of the leading coefficient
also is
 a polynomial of degree $O(\tau)$.
In the general case, this has not yet been disproved.

In codimensions 2 for a differential dimensional polynomial
the bound is proved (see 5.6.7, \cite {KLMP}),   and it
coincides with that obtained in this paper for homogeneous systems
generalized polynomials.

Now in codimensions 3, 4, and 5 (see Theorem~\ref{prim2})
exact upper bounds are obtained for the first time.
Note (see the example~\ref{ex}) that in codimension 3
the bound  is achieved.

In the case when the characteristic polynomial of a homogeneous ideal
in the ring of generalized polynomials is a constant, we
obtain the upper
double exponential bound  of it
 by the number of generalized unknowns. This result is 
similar to the result of D. Grigoriev.

\section{Preliminary facts.}

One can find 
basic concepts and facts  in~~\cite{Kolchin,Ritt,KLMP}. 

Denote   the set  of integers by $\Z$, 
non-negative integers by $\N_0$  and   binomial coefficients 
$\frac{s(s-1)\dots(s-m+1)}{m!}$ by $\binom{s}{m}$.

For vector $e=(j_1,\dots, j_m) \in\N_0^m$, the {\bf order} of  $e$ 
is defined by 
$\ord e=\sum_{k=1}^mj_k$. 
Note that any numerical 
polynomial $v(s)$ can be written as $v(s) =\sum_i^da_i\binom{s+i}{i}$,
where $a_i\in\Z$. We call numbers $(a_d,\dots, a_0)$ {\bf
standard coefficients } of polynomial $v(s)$.

\begin{definition}\label{min}(see\cite{KLMP}, definition 2.4.9 or \cite{Kondr}).
Let $\omega=\omega(s)$ be a univariate numerical polynomial in $s$ and let
$d=\deg\omega$. The sequence of {\bf minimizing coefficients}  
$b(\omega)$ 
is the vector $b(\omega)=(b_d,\dots ,b_0)\in\Z^{d+1}$ defined by 
induction on
$d$ as follows. If $d=0$ (i.e. $\omega$ is a constant), then
$b(\omega)=(\omega)$. Let $d>0$ and $\omega(s)=\sum^d_{i=0}a_i\binom{s+i}{i}$.
Let $v(s) = \omega(s+a_d) - \binom{s+1+d+a_d}{d+1} + \binom{s+d+1}{d+1}$. Since
$\deg v<d$, one may suppose that the sequence of minimizing coefficients
$b(v)=(b_k,\dots ,b_0)$ $(0\leq k<d)$ of the polynomial $v(s)$ has been 
defined. To define the same for $\omega$ we set $b(\omega)=(a_d,0,\dots
,0,b_k,\dots ,b_0)\in\Z^{d+1}$.
\end{definition}

Now  we define the Kolchin dimension polynomial
of a subset $E\subset \N_0^m$.
\begin{definition}\label{Kolchin}
Regard the following partial order on $\N_0^m$: the relation
$(i_1,\dots, i_m)\le(j_1,\dots, j_m)$ is equivalent to
$i_k\le j_k$ for all $k=1,\dots, m$.
We  consider a function $\omega_E(s)$, that   in a point $s$ 
equals $\Card V_E(s)$, where $V_E(s)$ is the set
of points $x\in\N_0^m$ such that $\ord x\le s$ 
and for every $e\in E$ the condition $e\le x$ isn't true.
Then (see for example, \cite{Kolchin}, p.115, or \cite{KLMP}, 
theorem 5.4.1) function $\omega_E(s)$ for all sufficiently large 
$s$ is a numerical polynomial. We call this polynomial
the Kolchin {\bf dimension polynomial} of a subset $E$.
\end{definition}

Not every numerical polynomial is a Kolchin dimensional polynomial for 
some set $E$.
The connection of these concepts is established in the following theorem.

\begin{theorem}\label{w}
(see \cite{Kondr} and \cite{KLMP}, proposition 2.4.10). 
The sequence of
 minimizing  coefficients
of
dimension polynomial Kolchin consists of only non-negative integers.
The converse is also true: if the sequence of minimizing coefficients 
of some numerical  polynomial consists of non-negative numbers, 
then it is the Kolchin dimension polynomial of some set $ E $.
We denote the set of such polynomials by $W$.
\end{theorem}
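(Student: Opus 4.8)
The plan is to prove the two inclusions separately, in both cases by induction on $d=\deg\omega$ and organised around the recursion in Definition~\ref{min}. Write $W$ for the set of polynomials with non-negative minimizing coefficients and $\mathcal K$ for the set of Kolchin dimension polynomials, so the assertion is $W=\mathcal K$. Throughout put $b(\omega)=(a_d,0,\dots,0,b_k,\dots,b_0)$, where $(b_k,\dots,b_0)=b(v)$ is the minimizing sequence of the auxiliary polynomial $v$ of degree $k<d$. The telescoping identity
\[
\binom{s+1+d+a_d}{d+1}-\binom{s+d+1}{d+1}=\sum_{i=1}^{a_d}\binom{s+d+i}{d}
\]
(repeated use of Pascal's rule) turns the defining relation into the explicit inverse
\[
\omega(s)=v(s-a_d)+\sum_{i=1}^{a_d}\binom{s+d+i-a_d}{d}=v(s-a_d)+\sum_{j=0}^{a_d-1}\binom{s+d-j}{d},
\]
which I use to reconstruct $\omega$ from $v$. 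The base case $d=0$ is immediate: a constant Kolchin polynomial equals $\Card V_E(s)\ge 0$ for large $s$, and conversely a non-negative constant $b_0$ is realised by $E=\{(b_0)\}\subset\N_0^{1}$, with $b(\omega)=(b_0)$ in both cases.

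For sufficiency ($W\subseteq\mathcal K$) I give an explicit construction, strengthening the inductive hypothesis to assert that a degree-$d$ element of $W$ is realised by a set in $\N_0^{d+1}$. Thus realise $v=\omega_{E'}$ with $E'\subset\N_0^{k+1}$, regarded as living in the coordinates $x_2,\dots,x_{k+2}$, and in $\N_0^{d+1}$ (coordinates $x_1,\dots,x_{d+1}$) set
\[
E=\{(a_d+1,0,\dots,0)\}\ \cup\ \{(a_d,e'):e'\in E'\}\ \cup\ \{(a_d,\mathbf{u}_i):k+3\le i\le d+1\},
\]
where $\mathbf{u}_i$ is the unit vector in coordinate $i$. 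The first generator caps $x_1\le a_d$; the remaining generators all have first coordinate $a_d$, so they constrain only the top slice $x_1=a_d$, where they impose $E'$ on $x_2,\dots,x_{k+2}$ and force $x_{k+3}=\dots=x_{d+1}=0$. Slicing by $x_1$ then gives, for each $j<a_d$, the full $d$-dimensional count $\binom{s-j+d}{d}$, and for $x_1=a_d$ the count $\omega_{E'}(s-a_d)=v(s-a_d)$; summing reproduces the inverse relation above, so $\omega_E=\omega$.

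For necessity ($\mathcal K\subseteq W$) take $E$ with $\omega_E=\omega$ of degree $d$. Decompose the avoidance region into disjoint shifted coordinate boxes, $V_E=\bigsqcup_l\bigl(c^{(l)}+\N_0^{S_l}\bigr)$ with $S_l\subseteq\{1,\dots,m\}$, so that $\omega_E(s)=\sum_l\binom{s-\ord c^{(l)}+|S_l|}{|S_l|}$. Comparing leading terms shows $a_d=\#\{l:|S_l|=d\}\ge0$ is an integer. It remains to prove that $v$, whose degree is $<d$ by construction (the subtracted sum cancels the degree-$d$ part), is again a Kolchin dimension polynomial; granting this, the inductive hypothesis yields $b(v)\ge0$ componentwise, whence $b(\omega)=(a_d,0,\dots,0,b(v))\ge0$.

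The heart of the argument, and the step I expect to be the main obstacle, is this last reduction: one must exhibit a set $E^{*}$ in fewer variables with $\omega_{E^{*}}=v$, i.e.\ show that $\mathcal K$ is closed under the passage $\omega\mapsto v$. The difficulty is that the normalization in Definition~\ref{min}, the shift $s\mapsto s+a_d$ together with the precise correction $\sum_{i=1}^{a_d}\binom{s+d+i}{d}$, must be matched to an honest geometric operation that strips the $d$-dimensional boxes from the decomposition of $V_E$ and re-assembles the remainder as the avoidance region of a genuine staircase; verifying that the residual region is \emph{exactly} of this form, and not merely a numerical polynomial of the correct degree, is where the real work lies. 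By contrast the slice bookkeeping in the sufficiency construction is routine. I expect to carry out the reduction by choosing the box decomposition canonically and tracking how the top-dimensional boxes are absorbed by the correction term, thereby reducing the claim to the inductive hypothesis.
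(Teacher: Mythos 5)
First, a point of reference: the paper does not prove Theorem~\ref{w} at all --- it is imported from \cite{Kondr} and from Proposition 2.4.10 of \cite{KLMP} --- so there is no in-paper argument to measure yours against; your proposal has to stand on its own. Half of it does. The sufficiency direction ($W\subseteq\mathcal K$) is correct and essentially complete: your inversion $\omega(s)=v(s-a_d)+\sum_{j=0}^{a_d-1}\binom{s+d-j}{d}$ of the defining recursion is right, and the explicit set $E\subset\N_0^{d+1}$ (a cap $x_1\le a_d$, full $d$-dimensional slices for $x_1<a_d$, and a padded copy of $E'$ in the top slice $x_1=a_d$) does realize $\omega$; the slice count reproduces the inversion exactly.

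The necessity direction ($\mathcal K\subseteq W$), however, is not proved: you say yourself that the key step --- exhibiting $E^*$ with $\omega_{E^*}=v$ --- is ``where the real work lies'' and you only announce a strategy for it. That step is the entire content of this half of the theorem, and your sketched mechanism (the top-dimensional boxes of a decomposition of $V_E$ being ``absorbed'' by the correction term $\sum_{i=1}^{a_d}\binom{s+d+i}{d}$) does not go through box by box. For $E=\{(0,2,0),(0,0,2),(0,1,1)\}\subset\N_0^3$ the region $V_E$ decomposes, necessarily, into three one-dimensional boxes of orders $0,1,1$, so $\omega_E(s)=3s+1$ and $a_1=3$; after the shift the boxes contribute $\binom{s+4}{1}+2\binom{s+3}{1}$ while the correction is $\binom{s+2}{1}+\binom{s+3}{1}+\binom{s+4}{1}$, and the order-respecting pairing produces the difference $\binom{s+3}{1}-\binom{s+4}{1}<0$, i.e.\ a box shifted in the wrong direction. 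The residual $v=1$ emerges only after cancellation across boxes, and re-assembling such signed leftovers into an honest staircase complement in fewer variables is exactly what must be proved. Note also that subtraction is the dangerous operation here: $W$ is closed under $\Delta_1$ and positive shifts, not under arbitrary differences, so no soft closure property rescues the step. Since $\mathcal K\subseteq W$ is precisely the direction the paper relies on in the proofs of Theorems~\ref{prim} and~\ref{prim2} (``minimizing coefficients of a polynomial from $W$ are non-negative''), the proposal as it stands leaves the needed half of the statement unestablished; the standard route in \cite{KLMP} instead runs an induction on the generators of $E$ via the decomposition formula~(\ref{eq2}).
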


Note that the set $W$ is closed with respect to addition,
difference
\begin{equation}\label{delta}
\Delta_1\omega(s)=\omega(s)-\omega(s-1)
\end{equation}
and  positive shift: ($\omega(s)=\omega(s+j), \ j\in\N$).
 (see\cite{KLMP}, propositions 2.4.13 и 2.4.22).

Let $X=\{x_1,\dots,x_m\}$ be a finite system of elements. By $T=T(X)$ we
denote the free commutative semigroup with unity (written
multiplicatively), generated by the elements of $X$. Elements of $T$
will be called {\bf monomials}. Let $\theta\in T$,
$\theta=x_1^{e_1}\dots x_m^{e_m}$. By the {\bf order} of $\theta$
we shall call the sum $e_1+\dots+e_m$ that will be denoted by $\ord\theta$. Suppose, that the set of 
monomials
is linearly ordered and for any $\theta\in T$ the following conditions
hold:
$$
  1\leq\theta;
$$
and
if $\theta_1<\theta_2$, then
$$
  \theta\theta_1<\theta\theta_2.
$$
In this case we shall say, that a {\bf ranking} is defined on the set of monomials $T$.

Let $\F$ be a field, $P$ the vector $\F$-space with the basis
$T=T(X)$. We define on $P$ the function ``{\tt taking the leader}'' 
in the
following way: any $g$ in $P$ may be represented as a sum
$g=\sum_{\theta\in T} a_\theta \theta$, where only a finite number of
coefficients $a_\theta\in \F$ are distinct from zero (such representation
is unique up to the order of the terms). Among all monomials, present in
this expression with nonzero coefficients, we choose the maximal with
respect to the order introduced on the set $T$. This monomial will be
called the {\bf leader} of $g\in P$ and will be denoted by $\u_g$. 

\begin{definition}
Let some ranking on the set of monomials $T=T(X)$ be given and let $P$ be
the vector $\F$-space with the basis $T$. Suppose that $P$ is a
$\F$-algebra, and $\u_{A B}=\u_A\u_B$ for all $A,B\in P$. Furthermore,
suppose that $1\theta_1\cdot1\theta_2=1 \theta_1\theta_2\in P$ for any
$\theta_1,\theta_2\in T$; in particular, the generators $x_1,\dots,x_m$
pairwise commute. Such ring we shall call the \bf{ring of generalized
polynomials in the indeterminates} $X=\{x_1,\dots,x_m\}$.
\end{definition}

\begin{example}\label{4.1.10} {\it The ring of commutative polynomials over a field.}
Consider an arbitrary ranking on the set $X=\{x_1,\dots,x_m\}$. Let $P$ be
the algebra $\F[x_1, \dots,x_m]$ of polynomials in the commutative
indeterminates $x_1,\dots,x_m$ over a field $\F$. It is easy to see, that
the condition $\u_{A B}=\u_A\u_B$ holds for all $A,B\in P$ and therefore
we may treat $\F[x_1,\dots,x_m]$ as a ring of generalized polynomials in
the indeterminates $x_1,\dots,x_m$.
\end{example}

\begin{definition} An operator $\partial$ on a 
ring $\K$ is called a
{\bf derivation operator} (or {\bf differentiation}) iff 
$\partial(a+b)=\partial(a)+\partial(b)$ and $\partial(ab)=
a\partial(b)+\partial(a)b$ for all $a,b\in \K$.

A commutative ring $\K$ with a finite set 
$\Delta = \{\partial_1,\ldots,\partial_m\}$ of
mutually commuting derivation operators on $\K$ is called a 
{\bf   differential  ring}. 
\end{definition}

\begin{definition}\label{3.2.38}
Let $\F$ be a differential field and let 
$\Delta = \{\partial_1,\ldots,\partial_m\}$
 be a
basic set of derivation operators on $\F$. The ring
$D=\F[\partial_1,\ldots,\partial_m]$  of skew polynomials in indeterminates $\partial_1,\ldots,\partial_m$ 
with coefficients in $\F$ and the
commutation rules 
$\partial_i \partial_j=\partial_j \partial_i,\ \partial_i a=
a \partial_i+\partial_i(a)$
 for all
$a\in \F,\ \partial_i, \partial_j \in \Delta$
is called a {\bf (linear) differential (or $\Delta$-) operator ring}.
 on $\F$.
\end{definition}
\begin{example}\label{4.1.11}
 {\it The ring of differential operators over a field.}
Let $\F$ be a $\Delta$-field,
and let an arbitrary ranking be fixed on the set 
$T=T(\Delta)$. Then the ring
$D=\F[\partial_1,\dots,\partial_m]$
of linear differential operators over $\F$ (see
definition~\ref{3.2.38}) is a ring of generalized 
polynomials in the indeterminates $\partial_1,\dots,\partial_m$.
\end{example}

\begin{example}\label{4.1.12} {\it The ring of 
differential operators over a ring of
polynomials.}
Let $\F$ be a  $\Delta=\{\partial_1,\dots \partial_m\}$-field 
and $R$ a ring of commutative polynomials in
the indeterminates $y_1,\dots,y_n$ over $\F$. 
We define the derivation
operators $\Delta'=\{\partial'_1,\dots \partial'_m\}$ on $R$ 
in the following way. Set   $\partial'_i(f)=0$ for all
$i=1,\dots,m$,  $j=1,\dots,n$. For any
$1\le i\le m$ we fix a number $1\le j\le n$ (for different $i$ the corresponding indices $j$ may coincide) and set 
$\partial'_i(f)=\partial_i(f)y_j$ for
all $j=1,\dots,n$ and $f\in \F$. 
Then the ring $D_R$ of linear
$\Delta'$-operators over $R$ is a ring of generalized polynomials in
the indeterminates $X=\{\partial'_1,\dots,\partial'_m,y_1,\dots,y_n\}$. 
Indeed, if we
consider the ranking such that $\partial'_i>y_j$ 
for all $i=1,\dots,m$,
$j=1,\dots,n$, then the condition $\u_f\u_g=\u_{f g}$ is 
fulfilled.
\end{example}
Let $D$ be a ring of generalized polynomials in the indeterminates
$X=\{x_1,\dots,x_m\}$ over a field $\F$ and $F$ the free
$D$-module with the basis $B=\{f_1,\dots,f_n\}$. The $\F$-vector space
$F$ has as a basis the direct (Cartesian) product $T\times B$ of the
sets $T=T(X)$ and $B$. This product we shall call the {\bf set of terms} 
of   the module $F$,
$$
T_F=\{x_1^{i_1}\dots x_m^{i_m}f_j\ |\ (i_1,\dots,i_m)\in\N_0^m,\
  j=1,\dots,n\}.
$$
We cannot multiply terms, but we can define the product of a term by a
monomial satisfying the following conditions:

$ 
  \text{for any term}\ u\leq v, \ u,v\in T_F,\text{and for any  monomial }
\ \theta\in T
\ \text{true}\  \theta u\leq \theta v.
$

\begin{definition}\label{4.1.11}
A ranking will be called {\bf orderly} if the condition 
$\ord\theta_1<\ord\theta_2\
(\theta_1,\theta_2\in T)$ implies $\theta_1 f_i<\theta_2 f_j$ for all 
$1\leq  i,j\leq n$.
\end{definition}

\begin{example}\label{4.1.12}
Let a ranking on the set $T$ of monomials be given. 
We shall order the
terms  $T_F$:  $\theta_1f_i<\theta_2f_j$
if either $i<j$ or $i=j$ and  $\theta_1<\theta_2$.
Such ranking on $T_F$ is not orderly.
\end{example}

\begin{example}\label{4.1.13}
Let   a ranking  on the set $T$  is following:
$\theta_1<\theta_2$ iff  either
$\ord\theta_1<\ord\theta_2$, or $\ord\theta_1=\ord\theta_2$
and $\theta_1<\theta_2$   with respect to lexicographic order 
on monomials.
Let
$t_1,t_2\in T_F$. 
We set $t_1=\theta_1f_i<t_2=\theta_2f_j$ if
and only if
 either 
$\ord\theta_1<\ord\theta_2$,
or $\ord\theta_1=\ord\theta_2$ and $i<j$,
or $\ord\theta_1=\ord\theta_2$, 
$i=j$ and $\theta_1<\theta_2$.
This ranking is orderly. We shall call it {\bf standard}.
\end{example}

In the submodule of the free module $F$ over the ring of generalized
polynomials $D$
  a Gr\"obner basis exists:
\begin{definition} (see definition 4.1.25, \cite{KLMP}).
Let $D$ be a ring of generalized polynomials in indeterminates
$X=\{x_1,\dots,x_m\}$, $F$ a free $D$-module. Suppose that $M
\subseteq F$ is a submodule of $F$, $G \subset M$ is a finite set 
and $<$
is a ranking on the set of terms $T_F$. The set $G$ is called a 
{\bf Gr\"obner
basis} of $M$, if there exists for any nonzero 
$f\in M$ a
 representation:
$$
  f=\sum_{i=1}^r c_i\theta_ig_i,\quad  0\ne c_i\in \F,\  \theta_i
       \in  T(X),\ g_i  \in  G,\\
  \theta_i\u_{g_i}> \theta_{i+1}  \u_{g_{i+1}},
$$
that, in particular, implies
$$
  \u_f = \theta _1 \u_{g_1}.
$$
\end{definition}
 
We shall now consider graded modules over the ring of generalized
polynomials. Firstly  we consider in $T=T(X)$ the subset
$$
 T_s=\left\{x_1^{i_1}\dots x_m^{i_m}| \sum_{k=1}^m i_k=s,\ (i_1,\dots,i_m)\in\N_0^m \right\},
$$
$s\in\Z$ and  $T_s=\empty$ for all $s<0$.

\begin{definition}
Let $D$ be a ring of generalized polynomials over a field $K$ in
the indeterminates $X=\{x_1,\dots,x_m\}$
 We suppose the ranking of $T=T(X)$ to be orderly, and
$$
 D_s=\Bigl\{\sum_{\theta\in T_s} a_\theta\theta\ |\
a_\theta\in \F\ 
\vtop{\hbox{
and almost all coefficients } }\hbox{
are equal to 0.}  
\Bigr\}
$$

The ring $D$ will be called {\bf graded} if
$$
  D=\bigoplus_{s\in\N_0} D_{s}\qquad\text{and}\qquad
  D_{s} D_{r}\subseteq D_{s+r}
  $$
for all $s,r\in\N_0$.
\end{definition}

The rings (examples~\ref{4.1.10},~\ref{4.1.12}), 
with standart ranking
(see example~\ref{4.1.13}),  are graded.
The ring of differential operators over field
$\F$ (example~\ref{4.1.11}) 
isn't graded, if
there are non-constant elements in field
 $\F$.

\begin{definition}
Let $D$ be a graded ring of generalized polynomials over a field 
$\F$.
A $D$-module $M$ will be called {\bf graded}, if for any 
$s\in\N_0$     a $\F$-subspace $M_{s}$ of $M$ is defined such
 $M=\bigoplus_{s\in\N_0} M_{s}$ and
$$
  D_{s} M_{r}\subseteq M_{s+r}
$$
  
for all $s,\ r\in\N_0$. The elements of $M_{s}$ will
be called the {\bf homogeneous elements} of degree $s$.
\end{definition}

\begin{definition}
Let $_D M$ be a finitely generated module over a ring of generalized
polynomials and $M=\bigoplus_{s\in\N_0}M_{s}$ be
a grading of $M$. The function $\phi_M^{gr}$, whose value at
any  
$s\in\N_0$ is equal to 
$\dim_\F M_{s}$ will
be called the {\bf characteristic function} of the graded module $M$.
\end{definition}

\begin{theorem}\label{4.3.20} (see \cite{KLMP}, theorem 4.3.20.)
Let $D$ be a graded ring of generalized polynomials over a field in
the indeterminates $X=\{x_1,\dots,x_m\}$,
$_D M$ be a graded module and  $\{m_1,\dots,m_n\}$ be a finite
set of its generators such that $m_i\in M_{(\alpha)^i}$.
Then there exist sets $E_i\subset\N_0^m$ $(i=1,\dots,n)$ such that for all large
enough $s$ the characteristic function of $M$ is equal
to
\begin{equation}\label{eq0}
  \phi_M^{gr}(s)=\Delta_1
  \sum_{i=1}^n\omega_{E_i}(s-\alpha^i),
\end{equation}
where $\omega_E(s)$ 
is the Kolchin dimension
polynomial of the matrix $E$
(see theorem~\ref{Kolchin}, equation~\ref{delta}).
\end{theorem}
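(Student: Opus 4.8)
The plan is to realize $M$ as a quotient of a free graded module and to count a basis of each homogeneous component by Gr\"obner-standard terms, translating that count into Kolchin dimension polynomials.

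First I would introduce the free $D$-module $F=\bigoplus_{i=1}^n D f_i$ and grade it by declaring $\deg f_i=\alpha^i$, so that each term $\theta f_i$ with $\theta\in T(X)$ is homogeneous of degree $\ord\theta+\alpha^i$; concretely $F_s$ is the $\F$-span of those terms $\theta f_i$ for which $\ord\theta+\alpha^i=s$. The assignment $f_i\mapsto m_i$ extends to a surjection of graded $D$-modules $\pi\colon F\to M$ (here one uses $m_i\in M_{\alpha^i}$), and its kernel $N=\ker\pi$ is a graded submodule. Fixing the orderly ranking on $T_F$, the submodule $N$ possesses a Gr\"obner basis $G$, and the set $\mathcal S$ of standard terms — those $t\in T_F$ divisible by no leader $\u_g$, $g\in G$ — maps under $\pi$ to an $\F$-basis of $M$.

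Next I would split this basis by degree. Since every term of $T_F$ is homogeneous and $\pi$ preserves degree, the images of the standard terms lying in degree $s$ form an $\F$-basis of $M_{s}$; hence $\phi_M^{gr}(s)=\dim_\F M_{s}$ equals the number of standard terms of degree $s$. It remains to count these. For fixed $i$ set $E_i=\{(e_1,\dots,e_m)\in\N_0^m : x_1^{e_1}\cdots x_m^{e_m} f_i=\u_g \text{ for some } g\in G\}$. A term $\psi f_j$ divides $\theta f_i$ only when $j=i$ and the exponent vector of $\psi$ is $\le$ that of $\theta$ componentwise, so $\theta f_i$ is standard precisely when the exponent vector of $\theta$ is not $\ge$ any element of $E_i$; that is, this exponent lies in $V_{E_i}(\ord\theta)$ in the sense of Definition~\ref{Kolchin}.

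Finally I would assemble the count. By the definition of the Kolchin polynomial, the number of standard terms $\theta f_i$ with $\ord\theta\le t$ equals $\omega_{E_i}(t)=\Card V_{E_i}(t)$, which is polynomial in $t$ for $t$ large. Imposing $\ord\theta=s-\alpha^i$ and taking the first difference, the number of standard terms $\theta f_i$ of degree exactly $s$ is $\omega_{E_i}(s-\alpha^i)-\omega_{E_i}(s-\alpha^i-1)=\Delta_1\omega_{E_i}(s-\alpha^i)$. Summing over $i$ and using linearity of $\Delta_1$ gives $\phi_M^{gr}(s)=\Delta_1\sum_{i=1}^n\omega_{E_i}(s-\alpha^i)$, valid once $s$ is large enough that every argument $s-\alpha^i$ lies in the polynomial range of $\omega_{E_i}$ (possible since $n$ is finite). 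The step I expect to require the most care is the grading bookkeeping: verifying that $\pi$ is genuinely a graded map with graded kernel, and that restricting the standard-term basis of $M$ to degree $s$ still yields a basis of $M_{s}$, so that the global Gr\"obner-basis statement may legitimately be read off degree by degree.
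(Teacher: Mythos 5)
Your argument is correct and follows exactly the route the paper has in mind: the paper itself only cites \cite{KLMP} for this theorem, but its remark that ``the sets $E_i$ correspond to leaders of a homogeneous Gr\"obner basis of relations between generators (syzygy module)'' is precisely your construction of $E_i$ from the leaders of a Gr\"obner basis of $N=\ker\pi$, with the standard-term count giving $\Delta_1\omega_{E_i}(s-\alpha^i)$. The grading point you flag is harmless: the images of the standard terms form a basis of $M$ consisting of homogeneous elements, so their degree-$s$ part is automatically a basis of $M_s$.
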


As follows from the proof, the sets $E_i$ correspond to
leaders of a homogeneous Gr\"obner basis of 
relations between generators (syzygy module).    It is easy to see that
for sufficiently large s the function  $\phi_M^{gr}(s)$
is polynomial.

We denote it by $\omega_M(s)$ and call {\bf the characteristic polynomial}
graded finitely generated module $_DM $.
Its degree $d(M) =\deg(\omega_M)$ is called (generalized)
{\bf type}  of
module $M$, the difference $(m-1-d)$ - (generalized) {\bf codimension},
and the standard leading (nonzero) coefficient $\tau_{d}(M) $ - (generalized)
{\bf type dimension.}
  
Graded modules over a ring of generalized polynomials
have properties similar to properties
differential modules: $d(M)<m$ and $a_{m-1} (\omega_M) = \rk_DM $.

Let $F$ be a free $D$-module with
generators
$ f_1, \dots, f_n $.
Each element of $f \in F$ is represented as
$f =\sum_ {1 \le j \le n} \theta_jf_j $, where $\theta_j \in D$.
Denote by
$\ord_{f_i} f = \ord \theta_i $ and $\ord f = \max_ {1 \le i \le n}
 (\ord\theta_i) $.

Consider the following grading on $F$:
$ F_s = \sum_ {i = 1} ^ n T_ {s} f_i $.
Let $H$ be the submodule of the module $F$ generated by 
elements of $\Sigma \subset H $,
and $\ord_{f_j} h \le e_j $ for all $j = 1, \dots, n, \ h \in \Sigma$.
The induced grading arises on the module $ H $:
$H_s = H \cap F_s $.
The factor module $F/H$ can also be regarded as graded:
$(F/H)_s = (F_s/H_s)$, and $\omega_ {F / H} (s) = \omega_F (s) - 
\omega_H (s) =
\sum_ {i = 1} ^ n \binom {s + m-1} {m-1} - \omega_M (s) $.
Sometimes this polynomial
called the characteristic polynomial of a system of generalized 
polynomial   equations $\Sigma $ (or a system of $D$ -equations) and
denoted by $\omega_{[\Sigma]} $.

As follows from the theorem~\ref{4.3.20} (put 
$a_i = 0\ (i = 1, \dots, n), \ M = F / H$),
characteristic polynomial of a system of generalized polynomial equations
can be calculated as in the differential case:
(Theorem 4.3.5 \cite {KLMP}):
\begin{equation}\label{eq1}
\omega_{[\Sigma]} (s) = \sum_ {j = 1}^n\Delta_1 \omega_ {E_j} (s),
\end{equation} 
where $E_j\subset \N_0^ m$.

We are interested in following  
\begin{question}\label{1}
How to estimate the typical dimension of $\Sigma $ in
known orders $e_1, \dots, e_n$?
\end{question}

Firstly this question was asked in differential algebra
by J.Ritt
for  ordinary differential systems. Later
E.Kolchin decided this problem in a codimension  for nonlinear
systems.
His bound (see~\cite {Kolchin}, p. 199) is as follows:
typical differential dimension $ a_ {m-1} $ of the system $\Sigma$
does not exceed
$e_1 + \dots + e_n$.
 
In codimension 2, such a result is known  
(see 5.6.7, \cite{KLMP}):

Let $n=1$, then
$a_{m-2}(\omega_{\Sigma})\le e_1^2$. 

Both of these results are also true for systems of homogeneous 
generalized polynomial equations.

\section{Basic results.}

So, for systems of generalized homogeneous polynomial equations
in codimensions 1 and 2, the classical B\'ezout theorem holds.
If the codimension is greater than 2, in the general case this is 
not true.    Consider an example.

\begin{example}\label{ex}
Let    $k\in N$,
$
  \F=\C(x_1,x_2,x_3),\quad
  n=1,\quad D=\F\{\partial_1,\partial_2,\partial_3,y_1\}$ -- 

ring of differential operators over
ring of polynomials in one variable $y_1$,
(see the example~\ref {4.1.12}) over the field $\F $,
  $m=4$,
$$
  \Sigma=\{\partial_1^kf_1,(\partial_2^k+x_1\partial_3^k)f_1\}.
$$
Let's we have the orderly rank $\partial_1>\partial_2> \partial_3> y_1 $.
One can find a homogeneous Gr\"obner basis of the ideal $[\Sigma]$. 
It consists of
elements

$G=\{\partial_1^kf_1$, $(\partial_2^k+x_1\partial_3^k)f_1$, 
$\partial_1^{k-1}\partial_3^{k}y_1f_1$,
$\partial_1^{k-2}\partial_3^{2k}y_1^2f_1$, ...,
$\partial_1^{k-i}\partial_3^{i*k}y_1^if_1$, ...,
$\partial_3^{k^2}y_1^kf_1\}$,                                                                                                                               

From here according to the equation~(\ref{eq1}), 
$\omega_{[\Sigma ]}=\Delta_1\omega_E$, where 
$$
E = \left( 
\begin{matrix} 
k & 0 & 0 & 0\\ 
0 & k & 0 & 0\\
k-1&0&k &1\\
\dots&\dots&\dots&\dots\\
k-i&0&ik &i\\
\dots&\dots&\dots&\dots\\
0 & 0 & k^2&k\\
\end{matrix} 
\right).
$$ 

One of the main methods for calculating the dimension polynomial of a matrix
is the using of the formula (see \cite {KLMP}, Theorem 2.2.10):

\begin{equation}\label{eq2}
\omega_E(s)=\omega_{E\cup e}+\omega_H(s-\ord (e)),
\end{equation}
where $e\in N_0^m$, $H$  is the matrix obtained by subtracting the 
vector $e$ from each row of $E$ (negative numbers are replaced by zeros).

Apply the formula~(\ref {eq2}) 
$e =(0, k, 0,0) $ $k$ times, we obtain 
$\omega_E=k\omega_{E_1}$, here 
$$
E_1 = \left( 
\begin{matrix} 
k &  0 & 0\\ 
k-1&k &1\\
\dots&\dots&\dots\\
k-i&ik &i\\
\dots&\dots&\dots\\
0 &  k^2&k\\
\end{matrix} 
\right).
$$ 
 
By Theorem 2.2.17 (see \cite {KLMP}) we have:
  
$\Delta_1(\omega_{E_1})=\omega_{E_2}+
\omega_{E_3}$, where $E_2$, $E_3$ are the matrixs, 
obtained by 
deletion, respectively,
second and third columns of the matrix $E_1$.
Applying corollary 2.3.21 (see~\cite {KLMP}),
we get 
 $\omega_{E_2}=1+2+
\dots+k=k(k+1)/2$ and 
$\omega_{E_3}=k(1+2+\dots+k)=k^2(k+1)/2$,
whence  $\omega_E=k^2(k+1)^2/2$.
If the classical B\'ezout theorem holds for the system $\Sigma$,
we would have to have $\omega_{[\Sigma]} = k^2 (k + 1)^2/2 \leq k^3$
(the system has codimension 3, while it has 2 homogeneous generators),
which is wrong.
\end{example}

\begin{theorem}\label{prim}
Let $D$ be a graded
ring of generalized polynomials over the field $\F$ in indeterminates
$X =\{x_1, \dots, x_m \}$,
$F = \bigoplus_{i = 1}^nD $ - free graded
$D$-module with generators $f_1, \dots, f_n $,
$\Sigma \subset F $ is a system of   homogeneous
 $D$-equations.   Let be   
$\ord_ {f_j} h \le e_j $ for any $h \in \Sigma $.

Then the following statements are true:

if the codimension of the system is 0, then
typical differential dimension 
 does not exceed $n$;

if the codimension of the system $ \Sigma $ is 1, then
$ \tau_d (\Sigma) \le e_1 + \ dots + e_n $;

if the codimension of $ d(\Sigma) $ is 2, then
$\tau_d (\Sigma) \le (e_1 + \dots + e_n)
\max_ {i = 1} ^ ne_i + \ prod_ {i <j} e_ie_j 
\le (e_1 + \dots e_n)^2$.

This bound is being achieved, see an example from~\cite{Kondr1}.
\end{theorem}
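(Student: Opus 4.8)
The plan is to push everything through the component description in equation~(\ref{eq1}) and reduce to single–matrix estimates. Writing $\omega_{[\Sigma]}(s)=\sum_{j=1}^n\Delta_1\omega_{E_j}(s)$, each summand is $\Delta_1$ of a Kolchin polynomial, and each such polynomial lies in $W$ and hence has a positive standard leading coefficient (Theorem~\ref{w}). Therefore no cancellation occurs between components, $\deg\omega_{[\Sigma]}=\max_j\deg\omega_{E_j}-1$, the codimension of $\Sigma$ equals $\min_j\bigl(m-\deg\omega_{E_j}\bigr)$, and the type dimension is the sum of the standard leading coefficients of exactly those $\omega_{E_j}$ attaining this minimal codimension. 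Thus it suffices to (a) compute the leading standard coefficient of a single $\omega_E$ in codimension $c\in\{0,1,2\}$ from the exponent vectors of $E$, and (b) control how large these vectors can be in terms of $e_1,\dots,e_n$.

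For step (a) I would start from the inclusion–exclusion expansion $\omega_E(s)=\sum_{S\subseteq E}(-1)^{|S|}\binom{s-\ord(\bigvee_{e\in S}e)+m}{m}$, where $\bigvee$ is the coordinatewise maximum, extract the top coefficients, and apply the max–min identity $\min_i v_i=\sum_{\emptyset\ne S}(-1)^{|S|+1}\max_{i\in S}v_i$ coordinatewise. This yields the clean formulas
\[
 a_{m-1}(\omega_E)=\ord\Bigl(\bigwedge_{e\in E}e\Bigr),\qquad
 a_{m-2}(\omega_E)=\tfrac12\sum_{S\subseteq E}(-1)^{|S|}\ord\Bigl(\bigvee_{e\in S}e\Bigr)^{2},
\]
valid respectively when $\omega_E$ has degree $m-1$ and $m-2$; by Theorem~\ref{w} both are non-negative integers. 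Consequently the type dimension equals $\#\{j:E_j=\emptyset\}$ in codimension $0$, equals $\sum_j\ord(\bigwedge_{e\in E_j}e)$ in codimension $1$, and equals $\tfrac12\sum_j\sum_{S\subseteq E_j}(-1)^{|S|}\ord(\bigvee_{e\in S}e)^2$ in codimension $2$.

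Step (b) is the substantive part. The codimension $0$ bound is immediate, since each empty $E_j$ contributes $1$, giving at most $n$. For codimension $1$ one is tempted to bound $\ord(\bigwedge_{e\in E_j}e)$ by $e_j$ componentwise, but this is false: an S-polynomial of two generators can create a leader in component $j$ of order exceeding $e_j$ while a different component becomes correspondingly cheap. What survives is the conservation law $\sum_j\ord(\bigwedge_{e\in E_j}e)\le e_1+\dots+e_n$, which I would prove by tracing each minimal generator of the leading module back to a $D$-combination of the $\Sigma$-generators and accounting the total order budget; this recovers Kolchin's bound and its equality case. For codimension $2$ the single-component estimate $\tfrac12\sum_{S\subseteq E}(-1)^{|S|}\ord(\bigvee_{e\in S}e)^2\le e^2$ is the $n=1$ case (Theorem 5.6.7 of~\cite{KLMP}), provable by the reduction formula~(\ref{eq2}) together with the column-deletion identity for $\Delta_1$ (Theorem 2.2.17 of~\cite{KLMP}) exactly as in Example~\ref{ex}. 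For general $n$ the leaders in component $j$ are governed not only by $e_j$ but by the pairwise S-polynomial interactions, whose orders I would bound by the products $e_ie_j$; summing the diagonal contributions into $(e_1+\dots+e_n)\max_i e_i$ and the off-diagonal ones into $\sum_{i<j}e_ie_j$ gives the stated bound, and $(e_1+\dots+e_n)\max_i e_i+\sum_{i<j}e_ie_j\le(e_1+\dots+e_n)^2$ is then elementary.

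The main obstacle is precisely this order bookkeeping, at its sharpest in codimension $2$: one must show that in low codimension the Gröbner leaders cannot be larger than the pairwise products $e_ie_j$ permit, and that the cross terms assemble exactly into $\sum_{i<j}e_ie_j$. The codimension $0$ and $1$ cases are comparatively soft once the conservation law is established, whereas the quadratic case needs the finer structural analysis of the leading module that the matrix-reduction identities of~\cite{KLMP} are designed to supply.
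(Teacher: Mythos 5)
Your reduction via equation~(\ref{eq1}) to the separate polynomials $\omega_{E_j}$, the no-cancellation remark, and the inclusion--exclusion formulas $a_{m-1}(\omega_E)=\ord(\bigwedge_{e\in E}e)$ and $a_{m-2}(\omega_E)=\tfrac12\sum_{S}(-1)^{|S|}\ord(\bigvee_{e\in S}e)^2$ are all verifiable. But the proof lives entirely in your step (b), and there the two key claims are asserted rather than proved. The ``conservation law'' $\sum_j\ord(\bigwedge_{e\in E_j}e)\le e_1+\dots+e_n$ \emph{is} Kolchin's theorem in this setting; ``tracing each minimal generator of the leading module back and accounting the order budget'' is not an argument, since a single reduction step can raise orders. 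Worse, in codimension 2 you propose to bound the orders of the Gr\"obner leaders (the rows of the $E_j$) by the pairwise products $e_ie_j$. There is no such control: the degrees of Gr\"obner bases of the syzygy/leading modules are exactly what cannot be bounded polynomially --- the paper's own Example~\ref{ex} produces, from two generators of order $k$, leaders of order up to $k(k+1)>k^2$, and in higher codimension the growth is doubly exponential. So the route ``bound the leaders, then sum their contributions'' cannot be pushed through as stated.

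The paper's proof deliberately avoids estimating the sets $E_j$ altogether. Lemma~\ref{lemma1} selects $n$ $D$-independent elements $\Sigma_1\subset\Sigma$ (possible because positive codimension forces $\rk_D H=n$), forms the exact sequence $0\to N\to M\to F/H\to 0$ with $M=F/[\Sigma_1]$, computes $\omega_M$ \emph{exactly} as $\sum_{i=1}^n\bigl(\binom{s+m-1}{m-1}-\binom{s+m-1-\tilde e_i}{m-1}\bigr)$ with $\tilde e_i\le e_i$ via the Jacobi-number (diagonal sum) argument, and retains from $N$ only the qualitative fact that $w(s+e)=\omega_N(s+e)\in W$. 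The bounds in codimensions 1 and 2 then drop out of the non-negativity of the minimizing coefficients of $w$ (Theorem~\ref{w}) after applying $\Delta_1^{m-2}$, respectively $\Delta_1^{m-3}$, and equating the top coefficients to zero; no information about the size of the leaders of $N$ is ever used. To complete your version you would have to supply genuine proofs of the order-bookkeeping claims, and for codimension 2 with $n>1$ I do not see how to do that without in effect reproving Lemma~\ref{lemma1}.
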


First we prove the lemma.
\begin{lemma}\label{lemma1}
Suppose that under the conditions of the theorem~\ref{prim} 
the generalized type of the system $ \Sigma $ is greater than 1.
Then

\begin {equation} \label {W}
\omega_\Sigma (s) = \sum_ {i = 1}^n
\left (\binom {s + m-1} {m-1} - \binom {s + m-1- \tilde e_i}{m-1}\right)
-w (s),
\end {equation}

where $w(s + e)\in W $, $e=\max_ {i = 1}^ n (e_i) $, $ \tilde e_i 
\le e_i$.
\end{lemma}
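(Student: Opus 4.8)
The plan is to read off $\omega_\Sigma$ from the combinatorial formula~(\ref{eq1}) and to split off, from each line $f_i$, a single ``B\'ezout'' term carried by the lowest leader on that line. First I fix a homogeneous Gr\"obner basis of $H$. For each $i$ let $a_i$ be an element of minimal order among the exponents in $E_i$ (the leaders that are $f_i$-multiples) and put $\tilde e_i=\ord a_i$; when no leader lies on the $f_i$-line that index is vacuous. Because $\Sigma$ generates $H$ with $\ord_{f_i}h\le e_i$ and every homogeneous element has all its nonzero components in a single degree, the lowest $f_i$-leader is realized by (the leading term of) a generator, so $\tilde e_i\le e_i$. A direct count gives $\omega_{\{a\}}(s)=\binom{s+m}{m}-\binom{s-q+m}{m}$ for a single vector of order $q$, hence
\begin{equation*}
\Delta_1\omega_{\{a_i\}}(s)=\binom{s+m-1}{m-1}-\binom{s+m-1-\tilde e_i}{m-1}.
\end{equation*}
Writing $\Delta_1\omega_{E_i}=\Delta_1\omega_{\{a_i\}}-\Delta_1(\omega_{\{a_i\}}-\omega_{E_i})$ and summing over $i$ through~(\ref{eq1}) reproduces the main term of~(\ref{W}) and forces $w(s)=\sum_{i=1}^n\Delta_1(\omega_{\{a_i\}}(s)-\omega_{E_i}(s))$. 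This bookkeeping is routine; the real content is the membership $w(s+e)\in W$.

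Second, I would reduce that membership to a statement for each index. Since $\{a_i\}\subseteq E_i$ we have $V_{E_i}(s)\subseteq V_{\{a_i\}}(s)$, so each difference $\omega_{\{a_i\}}-\omega_{E_i}$ is non-negative. As $W$ is closed under addition, under $\Delta_1$, and under positive shifts (the properties recorded after Theorem~\ref{w}), and $\Delta_1$ commutes with $s\mapsto s+e$, it suffices to show $g_i(s):=\omega_{\{a_i\}}(s+e)-\omega_{E_i}(s+e)\in W$ for every $i$; then $w(s+e)=\sum_i\Delta_1 g_i\in W$.

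Third, to analyse $g_i$ I would assemble $E_i$ from $\{a_i\}$ by adjoining its remaining vectors one at a time and invoking~(\ref{eq2}) at each step: adjoining a vector $b$ to a current set contributes a summand $\omega_{R}(s-\ord b)$ with $R$ the residual matrix of~(\ref{eq2}). Iterating yields
\begin{equation*}
\omega_{\{a_i\}}(s)-\omega_{E_i}(s)=\sum_k \omega_{R_{i,k}}(s-c_{i,k}),\qquad \omega_{R_{i,k}}\in W,
\end{equation*}
where each shift $c_{i,k}$ is the order of some vector of $E_i$. Were all $c_{i,k}\le e$, then $g_i(s)=\sum_k\omega_{R_{i,k}}(s+(e-c_{i,k}))$ would be a sum of non-negative shifts of members of $W$, hence in $W$. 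The obstacle, and the reason the shift is by exactly $e$, is that the minimal generators of the leading module may have order far exceeding $e$ --- already in Example~\ref{ex} they reach $k^2+k$ while $e=k$ --- so some $c_{i,k}>e$ and those summands are \emph{not} separately in $W$. Hence the argument cannot be term-by-term, and a cancellation among the high-order residuals must be produced.

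Extracting this cancellation is the main difficulty. The route I would take is induction on the number $m$ of indeterminates, using the column-deletion identity (Theorem 2.2.17 of~\cite{KLMP}, already used in Example~\ref{ex}) to rewrite $\Delta_1 g_i$ as a difference of dimension polynomials in $m-1$ indeterminates; deleting a column does not raise orders, so the shift by $e$ is preserved down the induction, and the base cases of small type are exactly the low-codimension situations settled in Theorem~\ref{prim}. The crux is to couple the column-deletions performed on the $\omega_{\{a_i\}}$-part and on the $\omega_{E_i}$-part so that after the shift their difference is manifestly a non-negative combination of Kolchin dimension polynomials; checking that the minimizing coefficients remain non-negative throughout this reduction is where the work concentrates.
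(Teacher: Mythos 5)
Your decomposition starts from the leading-term sets $E_i$ of a homogeneous Gr\"obner basis of $H$ and reduces everything to the membership $w(s+e)\in W$ for the remainder $w=\sum_i\Delta_1\bigl(\omega_{\{a_i\}}-\omega_{E_i}\bigr)$. That membership is exactly where the proposal breaks down, and you say so yourself: the elements of a Gr\"obner basis of $H$ can have order far exceeding $e$ (already $k^2+k$ against $e=k$ in Example~\ref{ex}), so the summands produced by iterating~(\ref{eq2}) are shifted by amounts larger than $e$ and are not individually turned into positive shifts of elements of $W$; the compensating cancellation is never exhibited. The final paragraph (induction on $m$ via column deletion, ``coupling'' the deletions on the two parts) is a plan rather than an argument, and the lemma's essential content is left unproved. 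There is a second, smaller gap: your $\tilde e_i$ is the minimal order of a leader on the $f_i$-line, and the inequality $\tilde e_i\le e_i$ does not follow from $\ord_{f_j}h\le e_j$ --- each generator contributes a leader to only one line, and the first leader appearing on another line may arise only from S-polynomial reductions at much higher order.

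The paper avoids both difficulties by not touching the Gr\"obner basis of $H$ at this stage. Under the lemma's hypothesis $\rk_D F/H=0$, so $\Sigma$ contains $n$ $D$-independent elements $\Sigma_1$; for $M=F/\langle\Sigma_1\rangle$ the Jacobi-number (diagonal transversal) argument computes $\omega_M$ exactly as the main term of~(\ref{W}), with $\tilde e_i=\ord_{f_i}h_i\le e_i$ read off a diagonal sum of the order matrix of $\Sigma_1$ --- this is how the inequality $\tilde e_i\le e_i$ is actually secured. The correction term is controlled by the exact sequence $0\to N\to M\to F/H\to 0$, where $N$ is generated by the images of $\Sigma\setminus\Sigma_1$: these generators sit in degrees $\alpha^i\le e$, so Theorem~\ref{4.3.20} expresses $\omega_N(s)$ as $\Delta_1$ of a sum of polynomials $\omega_{E_i}(s-\alpha^i)$, and the shift by $e$ makes every term a non-negative shift of a Kolchin dimension polynomial. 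Hence $w=\omega_N$ satisfies $w(s+e)\in W$ with no cancellation needed. To repair your proof you would essentially have to reconstruct this comparison with a complete intersection; the term-by-term analysis of the leading module of $H$ cannot succeed on its own.
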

\begin{proof}
We denote by $H$ the submodule of the $D$-module $F$ 
generated by the system   $\Sigma$.
$D$ is an Ore ring and, since the codimension of $\Sigma$ is greater 
than 1,  $\rk_D F/H = 0$, whence
$\rk_D H = n$. We choose $n$ $D$-independent equations from
 $\Sigma $,
and let $ _DM$ - $D$-factor module $F$ by
these equations (we denote them by $\Sigma_1$. We have the exact 
sequence of graded $D$-modules:

\begin{equation}\label{toch}
0\to N\to M->\to F/H\to 0.
\end{equation}

We can assume that $N$ is a graded submodule of the module $M$,
generated by the equations $\Sigma\backslash\Sigma_1$, and let 
$\alpha^i$ - the degrees of these generators.
Gradings, associated with the choice of  homogeneous 
generators ($N_s=D_{s- \alpha^i} g_i$) and grading,
induced by $M$
($N_s = M_s \cap N$) 
coincide.

Let $e =\max (e_1, \dots, e_n)$. From the theorem~\ref{4.3.20} it 
follows that
$\omega_N (s + e)\in W $. Indeed, the generators 
$g_i$ of the module $N$
have degrees $\alpha^i$ not greater than $e_i$ and from the 
formula~(\ref{eq0})
we get that $\omega_N(s + e)=\sum_{i=1}^k\omega_i(s+e-\alpha^i)$, 
where   $\omega_i \in W$
(here  $k = \Card (\Sigma) -n)$. Because $\alpha^i\le e_i$,
keeping in mind  closed $W$ relatively positive
shift and summation  we get that $\omega_N(s + e)\in W$.

To calculate $\omega_M = \omega_{\Sigma_1}$ we use
$D$-independence of the equations $\Sigma_1$. Proof of Lemma 5.8.2
(\cite {KLMP}) can also be done for a system of generalized polynomial
equations, therefore we have: $J(\Sigma_1) \ne \infty $, where $J$ is 
the Jacobi number  of
matrix $(\ord_ {f_i} h_j)_{i, j = 1}^n$, $h_i\in \Sigma_1$. 
Choosing the final diagonal
 sum of the matrix and renumbering the equations $\Sigma_1$, 
since $\ord h_ {f_i} h_i \ne \infty $,
we get
\begin{align}
&\omega_{\Sigma_1}(s)=\omega_F(s)-\sum_{i=1}^n\binom{s+m-1-\ord h_i}{m-1}= \notag  \\
&
\sum_{i=1}^n\binom{s+m-1}{m-1}-\sum_{i=1}^n\binom{s+m-1-\tilde e_i}{m-1}, \notag
\end{align}
where $\tilde e_i=\ord_{f_i} h_i\le e_i$. 

To prove the lemma it remains to use the equality
$\omega_M =\omega_N + \omega_{F / H}$,
obtained from the sequence~(\ref{toch}).

\end{proof}
We return to the proof of the theorem.
\begin{proof}
The case $d(\Sigma) = m-1 $ follows  from
Theorems~\ref{4.3.20}.

Let $d (\Sigma) = m-2 $.
It follows from the lemma~\ref{W} that
$\sum_ {i = 1}^n((s + 1) - (s + 1- \tilde e_i)) - \Delta_1 ^ {m-2}
\omega_\Sigma (s) \in W $ (since $W$ is closed relative to the operation
$\Delta_1$, see the formula~(\ref{delta})). From here
$\sum_{i = 1}^ n\tilde e_i- \tau_d (\Sigma) \in W $. 
Because minimizing coefficients of a polynomial from $W$ are 
non-negative,  we immediately get that
$\tau_d (\Sigma) \le \sum_ {i = 1} ^ n \tilde e_i \le \sum_ {i = 1}^ne_i $.

Let $d(\Sigma)=m-3$. As above, we use the operator $\Delta_1^{m-3} $
to expression~(\ref{W}). We get:

\begin{align}
&\Delta_1^{m-3}\left(\sum_{i=1}^n
\binom{s+m-1}{m-1}-\binom{s+m-1-\tilde e_i}{m-1}\right)
 -\tau_d(\Sigma)=w'(s),\notag  \\
& w'(s+e)\in W,  \notag  
\end{align}
whence
$$
\left(\sum_{i=1}^n
\binom{s+2}{2}-\binom{s+2-\tilde e_i}{2}\right)
 -\tau_d(\Sigma)=w'(s),\ w'(s+e)\in W
$$
and $\sum_{i=1}^n (\tilde e_i(s+1)-\binom{\tilde e_i}{2})-\tau_d(\Sigma)=w'(s)$.
Let the minimizing coefficients of the polynomial $w'(s + e)$ be 
equal to $(b_1, b_0)$.
Then $w '(s + e) = b_1 (s + 1)-\binom{b_1}{2} + b_2 $, 
and $ b_1> = 0, \ b_2> = 0 $.
We have:

$\tau_d(\Sigma)=(\sum_{i=1}^n\tilde e_i-b_1)(s+1)-\sum_{i=1}^n(\tilde e_i-\binom{\tilde e_i}{2})+eb_1+\binom{b_1}{2}-b_0
$.

Equating the coefficient in $s$ to the right side of the 
equality to zero, we obtain
$b_1=\sum_{i=1}^n\tilde e_i$, whence $\tau_d(\Sigma)\le(\sum_{i=1}^n\tilde e_i)
e-\sum_{i=1}^n\binom{\tilde e_i}{2}+\binom{\sum_{i=1}^n\tilde e_i}{2}=\prod_{i<j}\tilde e_i\tilde e_j
+(\sum_{i=1}^n\tilde e_i)e\le(\sum_{i=1}^ne_i)\max_{i=1}^ne_e+\prod_{i,j=1,iБ j}^ne_ie_j$
\end{proof}

Further we consider homogeneous
ideals in the ring of generalized polynomials, i.e. case $n = 1$.
Let an ideal be generated by elements of order no higher than $ e $.
If the generalized type of an ideal is 2, then from the theorem~\ref{prim}
follows that
its typical dimension does not exceed $e^2$, i.e. the classical B\'ezout theorem holds.

\begin{theorem}\label{prim2}
Let $D$ be a graded
ring of generalized polynomials over the field $\F$ in indeterminates
$X = \{x_1, \dots, x_m \}$,
$\Sigma \subset D$ is a system of homogeneous $D$-equations.
Let be
$\ord h \le e$ for any
$h\in\Sigma$.

Then the following bounds are true:

if the codimension of $\Sigma$ is 3, then
$\tau_d(\Sigma) \le e^2 (e + 1)^2/2$ (according to the example~\ref{ex}
this estimate is achieved);

if the codimension of $\Sigma$ is 4, then
$\tau_d(\Sigma)\le e^2(e+1)^2(3e^4+6e^3+11e^2+8e+8)/24$;

if the codimension of $\Sigma$ is 5, then
$\tau_d(\Sigma)\le 
  e^2 (e+1)^2 (288 +  480 e + 952 e^2  + 1264 e^3  + 1592 e^4
                          + 1648 e^5  + 1529 e^6  + 1174 e^7+ 775 e^8  + 420 e^9  + 183 e^10   + 54 e^11
         + 9 e^12  ) (e + 1)^2 /1152 
$

in any codimension $\tau> 0$ the generalized typical dimension 
$\tau_d(\Sigma)$
does not exceed $O(e^{2^{\tau-1}})$.
\end{theorem}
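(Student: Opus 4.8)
The plan is to reduce, via Lemma~\ref{lemma1} specialized to $n=1$, the estimation of the typical dimension to a purely combinatorial statement about polynomials in the set $W$, and then to extract the doubly-exponential growth from the recursive structure of the minimizing coefficients (Definition~\ref{min}). Concretely, set $d=d(\Sigma)$ and $\tau=m-1-d$; Lemma~\ref{lemma1} gives $\omega_\Sigma(s)=\binom{s+m-1}{m-1}-\binom{s+m-1-\tilde e}{m-1}-w(s)$ with $\tilde e\le e$ and $w(s+e)\in W$. Applying the difference operator $\Delta_1^{\,d}$, under which $W$ is stable, collapses $\omega_\Sigma$ to its leading standard coefficient $\tau_d(\Sigma)$ and turns the binomial part into $\binom{s+\tau}{\tau}-\binom{s+\tau-\tilde e}{\tau}$, so that
\[
Q(s)-\tau_d(\Sigma)=w'(s),\qquad w'(s+e)\in W,\quad \deg w'=\tau-1,
\]
where $Q(s)=\binom{s+\tau}{\tau}-\binom{s+\tau-\tilde e}{\tau}$ has explicit coefficients depending only on $\tilde e$ and $\tau$. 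Thus everything reduces to the following question: given that the degree-$(\tau-1)$ polynomial $w'(s+e)$ lies in $W$, how large can the constant $\tau_d(\Sigma)$ be?

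I would then run the defining recursion of the minimizing coefficients on $w'(s+e)$, peeling off one degree at a time from $\tau-1$ down to $0$. At the top, the leading standard coefficient of $w'(s+e)$ equals that of $Q$, namely $\tilde e\le e$, since neither the shift by $e$ nor the subtraction of $\tau_d(\Sigma)$ affects it. The crucial observation is that a single reduction step $\omega\mapsto v=\omega(s+a_k)-\binom{s+k+1+a_k}{k+1}+\binom{s+k+1}{k+1}$ squares the leading coefficient: a direct expansion shows that the new leading standard coefficient equals $\tfrac12\bigl(a_k\bigr)^{2}$ plus terms that are linear in $a_k$ and in the subleading coefficient of $\omega$. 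Writing $a^{(j)}$ for the leading standard coefficient obtained at degree $j$, this yields a recursion of the shape $a^{(j-1)}\le \tfrac12\bigl(a^{(j)}\bigr)^{2}+O_\tau\!\bigl(a^{(j)}+e\bigr)$, and an easy induction starting from $a^{(\tau-1)}\le e$ gives $a^{(j)}=O(e^{\,2^{\,\tau-1-j}})$. The constant $\tau_d(\Sigma)$ enters only in the very last, degree-$0$, minimizing coefficient $b_0$, so its non-negativity $b_0\ge 0$ translates into $\tau_d(\Sigma)\le a^{(0)}+(\text{lower order})=O(e^{\,2^{\tau-1}})$, which is the asserted bound. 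The explicit polynomials displayed for $\tau=3,4,5$ are exactly the values produced by carrying this recursion out symbolically, so they serve both as a consistency check and as the base of the induction.

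The main obstacle is the bookkeeping of the lower-order corrections in the recursion $a^{(j-1)}\le\tfrac12(a^{(j)})^{2}+\cdots$. Because the shift by $e$ inflates the subleading coefficients of $w'(s+e)$, the new leading coefficient is genuinely coupled to the subleading one, so a clean induction requires controlling the whole vector of standard coefficients at each level rather than the leading one alone; one must verify that every such coefficient at degree $j$ stays within $O(e^{\,2^{\,\tau-1-j}})$ so that the additive corrections never overtake the dominant square term. A secondary point to check is that the non-negativity of the intermediate minimizing coefficients (degrees $\tau-1,\dots,1$) is automatic, i.e. that $w'(s+e)\in W$ is internally consistent, so that the only binding constraint is the final $b_0\ge0$ in which $\tau_d(\Sigma)$ appears. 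Once these corrections are dominated by the square term, the $\tau-1$ successive squarings produce the exponent $2^{\tau-1}$ and close the argument.
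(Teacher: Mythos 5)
Your proposal follows essentially the same route as the paper: specialize Lemma~\ref{lemma1} to $n=1$, apply $\Delta_1^{d}$ to reduce to a relation $Q(s)-\tau_d(\Sigma)=w'(s)$ with $w'(s+e)\in W$, and then exploit the minimizing-coefficient recursion of Definition~\ref{min}, in which each level contributes a $\binom{\cdot}{2}$-type term that squares the accumulated leading coefficient, starting from $e$ at degree $\tau-1$ and ending with $O(e^{2^{\tau-1}})$ at degree $0$, where the single non-negativity constraint on the last coefficient yields the bound. The paper's bookkeeping via the cumulative sums $c_i=\sum_{j\ge i}b_j$ and the identity $c_{i-1}=\binom{\tau-i+e}{\tau-i+1}+\sum_{j=2}^{\tau-i+1}\binom{j-c_{i+j-2}}{j}$ is exactly the coefficient-vector control you flag as the remaining obstacle, so your outline matches the published argument.
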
  
\begin{proof}
is 
based on the lemma~\ref{lemma1} and the fact that minimizing 
coefficients are multivalued from the set $W$ are non-negative.
For $\Sigma'$, we choose the element $\Sigma$ in maximal order,
let it be $e$. Then, in the equation~(\ref{W}) $n = 1$, 
$\tilde e_1 = e $.

Consider the case of codimension 3.
We apply the operator $\Delta_1^{m-4}$ to both 
sides of the equality~(\ref{W}).

$$
\tau_d(\Sigma)=
\left(
\binom{s+3}{3}-\binom{s+3-e}{3}\right)
 -w'(s),\ w'(s+e)\in W
$$

Let the sequence of minimizing coefficients of the polynomial $w'$
is $(b_2, b_1, b_0)$. According to the definition~\ref{min}, we can 
explicitly express the standard coefficients
$w'$ through the numbers $b_2$, $b_1$, $b_0$ and find the coefficients 
of 'shifted'
the polynomial $w'(s + e)$.
Equating the coefficients at $s^2$, $s$
on the right side of the equation to 0, we get:
$b_2 = e$, $b_1 =e^2$ and
\begin{align}
&
\tau_d(\Sigma)=
\left(\binom{s+3}{3}-\binom{s+3-e}{3}\right) -
\left(\binom{s+3-e}{3}-\binom{s+3-2e}{3}\right) -  \notag  \\
&
\left(\binom{s+2-2e}{2}-\binom{s+2-2e-b_1}{2}\right) - b_0.  \notag
\end{align}

Substituting $s=-1$, we get $\tau_d(\Sigma) \le e^2 (e + 1)^2/2 $.

Bounds in any codimension are calculated in the same way.
Each time we will receive a polynomial in $e$.

If the precise coefficients of this polynomial are not important, 
but only its degree in $e$,
it is claimed to be $2^{\tau-1} $. Indeed,
let $d$ be the generalized dimension of the system $\Sigma$, i.e.
codimension $\tau$ of
the polynomial $\omega_\Sigma $ is equal to $\tau = m-1-d$.
Apply to~(\ref{W}) operator
$\Delta_1^{d}$ (while
$\Delta_1^{d}\omega_\Sigma$ is a polynomial of degree zero,
i.e.  constant = $\tau_d(\Sigma)$).
Comparing the degrees, we get that the degree $w '=\Delta_1^{d} w $ 
is less than $\tau $.
Let the minimizing coefficients of the polynomial $w '$
equal to $ (0,\dots, 0, b _{\tau-1}, \dots, b_0)$.
Replace in the resulting equation
$s$ variable
on $e$ and we have the following relation:
$$
\tau_d(\Sigma)=\binom{s+\tau+e}{\tau}-\binom{s+\tau}{\tau}-w'(s).
$$

We use the definition~(\ref{min}) and get
\begin{align}\label{fin}
&
\tau_d(\Sigma)=\binom{s+\tau+e}{\tau}-\binom{s+\tau}{\tau}-   \\ 
&
\sum_{k=\tau}^{1}
\left(\binom{s+k-\sum_{j=\tau}^{k}b_j}{k}-
\binom{s+k-\sum_{j=\tau}^{k-1}b_j}{k}\right).  \notag
\end{align}

Denote by $c_i= \sum_{j = i} ^ {\tau-1} b_j$
and rewrite
equation~(\ref{fin})
in this form:
\begin{align}\label{fin1}
&\tau_d(\Sigma)=\binom{s+\tau+e}{\tau}-\binom{s+\tau}{\tau}-   \\
&\sum_{k=\tau}^{1}
\left(\binom{s+k-c_k
}{k}-\binom{s+k-c_{k-1}
}{k}\right), \notag
\end{align}

Using identity
$$
\binom{s+k-1-a}{k}=\binom{s+k-a}{k}-\binom{s+k-1-a}{k-1},
$$
transform the equation~(\ref{fin1}) to the form:
\begin{equation}\label{fin2} 
\tau_d(\Sigma)=\binom{s+\tau+e}{\tau}-2\binom{s+\tau}{\tau}+   
\sum_{k=\tau}^{2}
\binom{s+k-1-c_{k-1}}{k}
+(s+1-c_0).
\end{equation}
Take $\Delta_1^{\tau-1}$ from both sides of the equality~(\ref{fin2}).
Will have:
$0=(s+1+e)-2(s+1)+(s-c_{\tau-1})+1$, откуда $c_{\tau-1}=e$.
By induction on $i$, we prove that for $1\le i <\tau-1$
it holds: 
$$c_i=O(e^{2^{(\tau-1)-i}}).$$
Let  $c_j=O(e^{2^{(\tau-1)-j}})$ 
for all
 $j:\ i\le j<\tau-1$.

Take $\Delta_1^{i-1}$ from both sides of the equality~(\ref{fin2}) 
and get:
$$
0
=\binom{s+\tau-i+1+e}{\tau-i+1}-2\binom{s+\tau-i+1}{\tau-i+1}+\sum_{k=\tau}^{i-1}\binom{s+k-i-c_{k-1}}{k-i+1}
$$

Substituting $-1$ instead of $s$,
we get
$$
0=\binom{\tau-i+e}{\tau-i+1}-2\binom{\tau-i}{\tau-i+1}+\sum_{k=\tau}^{i+1}\binom{k-i-1-c_{k-1}}{k-i+1} -c_{i-1}
$$

In the last sum we make the change $j= k-i + 1$ and get
$$
0=\binom{\tau-i+e}{\tau-i+1}+\sum^{j=\tau+1-i}_{j=2}\binom{j-c_{i+j-2}}{j} -c_{i-1}.
$$

Now we have a formula expressing
$c_{i-1}$ through $c_{i}, ..., c_{\tau-1}$:
\begin{align}
& 
c_{i-1}= \sum_{j=2}^{\tau-i+1}O\left(\binom{c_{i+j-2}}{j}\right) = 
\sum_{j=2}^{\tau-i+1}O(c_{i+j-2}^j)= 
\sum_{j=2}^{\tau-i+1}O(e^{j2^{\tau-i-j+1}})=
\notag\\
&
O(e^{2\cdot2^{\tau-1-i}})+ \sum_{j=3}^{\tau-i+1}O(e^{j2^{\tau-i-j+1}}) = 
O(e^{2^{\tau-i}})+\sum_{j=3}^{\tau-i+1}O(e^{2^{j-1}\cdot2^{\tau-i-j+1}})  = \notag \\
& 
O(e^{2^{\tau-i}})+\sum_{j=3}^{\tau-i}O(e^{2^{\tau-i}})= 
O(e^{2^{\tau-i}})
\notag
\end{align}

(we used the inductive assumption
and the fact that $j\le2^{j-1}$ for all $j \ge 2$).

Substituting $s = -1$ into the equation~(\ref{fin2}), we obtain
$
\tau_d(\Sigma)=O(c_1^2)-c_0\le O(2^{\tau-1})$.
\end{proof}

It is not known whether the resulting
double exponential typical dimension bound of
graded ideal in a ring of generalized polynomials is being achieved.
Note that it was proved in (\cite{Mayr}) that for
degrees of
elements in the Gr\"obner basis of the polynomial ideal
double exponential bound is achieved from below.

\end{document}